\def\R{{\mathbb R}}
\theoremstyle{plain}
\newtheorem{theorem}{Theorem}[section]
\newtheorem{lemma}[theorem]{Lemma}
\DeclareMathOperator{\supp}{supp}
\DeclareMathOperator{\dist}{dist}
\renewcommand{\leq}{\leqslant}
\renewcommand{\geq}{\geqslant}
\title[counterexample to the off-testing condition in two dimensions]{counterexample to the off-testing condition in two dimensions}
\subjclass[2010]{Primary 42B20}
\keywords{Fractional integral, Riesz transforms, off-testing}
\author[Christos Grigoriadis ]{ Christos Grigoriadis} 
\address{
Department of Mathematics \\ 
Michigan State University   \\ 
East Lansing, Michigan}
\email{grigori4@msu.edu}
\author[ Michail Paparizos]{ Michail Paparizos }
\address{ 
Department of Mathematics \\ 
Michigan State University   \\ 
East Lansing, Michigan}
\email{paparizo@msu.edu}
\begin{document}

{\begin{flushleft}\baselineskip9pt\scriptsize
\end{flushleft}}
\vspace{18mm} \setcounter{page}{1} \thispagestyle{empty}

\maketitle

\begin{abstract}
In proving the local $T_b$ Theorem for two weights in one dimension \cite{SaShUT} Sawyer, Shen and Uriarte-Tuero used a basic theorem of Hyt\"{o}nen \cite{Hy} to deal with estimates for measures living in adjacent intervals. Hyt\"{o}nen's theorem states that the off-testing condition for the Hilbert transform is controlled by the Muckenhoupt's $A_2$ and $A^*_2$ conditions. So in attempting to extend the two weight $T_b$ theorem to higher dimensions, it is natural to ask if a higher dimensional analogue of Hyt\"{o}nen's theorem holds that permits analogous control of terms involving measures that live on adjacent cubes. In this paper we show that it is not the case even in the presence of the energy conditions used in one dimension \cite{SaShUT}. Thus, in order to obtain a local $T_b$ theorem in higher dimensions, it will be necessary to find some substantially new arguments to control the notoriously difficult nearby form. More precisely, we show that Hyt\"{o}nen's off-testing condition for the two weight fractional integral and the Riesz transform inequalities is not controlled by Muckenhoupt's $A_2^\alpha$ and $A_2^{\alpha,*}$ conditions and energy conditions.
\end{abstract}

\maketitle

\section{Introduction}
 Characterizing two-weight norm inequalities for singular integrals is an important, long standing open problem, only recently solved in one dimension by Lacey, Sawyer, Shen and Uriarte-Tuero in a two-part paper \cite{LaSaShUT}-\cite{La}. Hyt\"{o}nen \cite{Hy} later removed a technical hypothesis, and for his proof an important piece was to bound the bilinear form when two functions are supported on disjoint half-lines in terms only of (his variant of) the Muckenhoupt $A_2$ constants. As mentioned in the abstract, Sawyer, Shen and Uriarte-Tuero \cite{SaShUT} used Hyt\"{o}nen's theorem to estimate the difficult nearby form in the one-dimensional local $T_b$ theorem and it seems natural to ask whether a higher dimensional analogue of Hyt\"{o}nen's theorem is true in order to estimate the nearby form in the higher dimensional local $T_b$ Theorem. Our paper answers this question negatively, even if we assume the energy conditions $\mathcal{E}^\alpha_2, \mathcal{E}^{\alpha,*}_2$, as in the case of the one dimensional two-weighted local $T_b$ Theorem.

 The key idea is the construction of two measures on the plane placed close to each other (Figure 1) so that the off-testing condition fails but the $A_2$ and energy conditions hold using some one-dimensional results from \cite{LaSaUr}. Following closely the aforementioned work, we first construct two measures in $\mathbb{R}$ with the novelty being the use of a `wrong' homogeneity of the one-dimensional Riesz, Poisson and fractional integrals that accommodates all $0<\alpha<2$. 

Let $0\leq \alpha<n$. For any locally finite Borel measure $\sigma$, we define the fractional integral on $\mathbb{R}^n$ by
$$
I^a (f\sigma)(x)=\int_{\mathbb{R}^n} \frac{f(y)}{|x-y|^{n-\alpha}}d\sigma(y),\ x\notin\supp(f\sigma) \
$$
for any $f\in L^2(\sigma)$, and the Riesz transforms by
$$
R^\alpha_m (f\sigma)(x)=\int_{\R^n}\frac{(t_m-x_m)f(t)}{|x-t|^{n+1-\alpha}}d\sigma(t),\quad  x\notin\supp(f\sigma) , \ \ 1\leq m\leq n
$$
where $x=(x_1,\dots ,x_n)$, $t=(t_1,\dots, t_n)$. If $\omega$ is another locally finite Borel measure, we say that the pair of weights $(\sigma,\omega)$ satisfies the fractional Muckenhoupt $\mathcal{A}^{\alpha}_2$ and $\mathcal{A}^{\alpha,*}_2$ conditions in $\R^n$ if 
\begin{equation*}\label{3}
\mathcal{A}^{\alpha}_2\equiv \sup_{Q\in\mathcal{I}}\mathcal{P}^\alpha(Q,\textbf{1}_{Q^c}\sigma)\frac{\omega(Q)}{|Q|^{1-\frac{\alpha}{n}}}<\infty
\end{equation*}
and
\begin{equation*}\label{1}
\mathcal{A}^{\alpha,*}_2\equiv \sup_{Q\in\mathcal{I}}\mathcal{P}^\alpha(Q,\textbf{1}_{Q^c}\omega)\frac{\sigma(Q)}{|Q|^{1-\frac{\alpha}{n}}}<\infty
\end{equation*}
where $\mathcal{I}$ denotes the collection of all cubes $Q$ in $\R^n$ whose sides are parallel to the axes and 
$$
\mathcal{P}^\alpha(Q,\mu)=\int_{\R^n}\bigg(\frac{|Q|^\frac{1}{n}}{(|Q|^\frac{1}{n}+|x-x_Q|)^2}\bigg)^{n-\alpha}d\mu(x),
$$
with $x_Q$ being the center of the cube, is the reproducing Poisson integral. We also say that the pair $(\sigma,\omega)$ 
satisfies the energy (resp. dual energy) condition if
\begin{equation*}
\left( \mathcal{E}_{2}^{\alpha }\right) ^{2}\equiv \sup_{Q=\dot{\cup}Q_{r}}%
\frac{1}{\sigma(Q)}\sum_{r=1}^{\infty }\left( \frac{\mathrm{P}
^{\alpha }\left( Q_{r},\mathbf{1}_{Q}\sigma \right) }{\left\vert
Q_{r}\right\vert ^{\frac{1}{n}}}\right) ^{2}\left\Vert
x-m^\omega_{Q_{r}}\right\Vert _{L^{2}\left( \mathbf{1}_{Q_{r}}\omega \right) }^{2}<\infty
\end{equation*}%
\begin{equation*}
\left( \mathcal{E}_{2}^{\alpha ,\ast }\right) ^{2}\equiv \sup_{Q=\dot{\cup}%
Q_{r}}\frac{1}{\omega(Q)}\sum_{r=1}^{\infty }\left( 
\frac{\mathrm{P}^{\alpha }\left( Q_{r},\mathbf{1}_{Q}\omega \right) }{
\left\vert Q_{r}\right\vert ^{\frac{1}{n}}}\right) ^{2}\left\Vert
x-m^\sigma_{Q_{r}}\right\Vert _{L^{2}\left( \mathbf{1}_{Q_{r}}\sigma \right) }^{2}<\infty
\end{equation*}
where the supremum is taken over arbitrary decompositions of a cube $Q$
using a pairwise disjoint union of subcubes $Q_{r}$, where 
$$
\mathrm{P}^\alpha(Q,\mu)=\int_{\R^n}\frac{|Q|^\frac{1}{n}}{(|Q|^\frac{1}{n}+|x-x_Q|)^{n+1-\alpha}}d\mu(x)
$$
is the standard Poisson integral and
$$
m_I^\mu\equiv\frac{1}{\mu(I)}\int x d\mu(x)=\left\langle \frac{1}{|I|_\mu}\int x_1d\mu(x),...,\frac{1}{|I|_\mu}\int x_nd\mu(x)\right\rangle.
$$

In the one-dimensional setting, Hyt\"{o}nen \cite{Hy} has characterized the restricted bilinear inequality,
\begin{equation}\label{2}
\bigg|\int_{\R\backslash I}\bigg(\int_I\frac{f(y)}{|x-y|}d\sigma(y)\bigg)g(x)d\omega(x)\bigg|\lesssim\mathcal{D}\big|\big|f\big|\big|_{L^2(\sigma)}\big|\big|g\big|\big|_{L^2(\omega)}
\end{equation}
for all intervals $I$, in terms of the Muckenhoupt conditions, namely
$$
\mathcal{D}\approx\sqrt{\mathcal{A}^0_2}+\sqrt{\mathcal{A}_2^{0,*}}
$$
where $\mathcal{D}$ is the best constant in (\ref{2}). In \cite{Hy} this inequality was proved for complementary half-lines, where it was denoted that the passage to an interval and its complement is then routine. In \cite{SaShUT}, Hyt\"{o}nen's characterization was extended to fractional integrals on the line with the same proof. Namely,
$$
\Bigg|\int_{\R\backslash I} \bigg(\int_{I} \frac{f(y)}{|x-y|^{1-\alpha}}d\sigma(y)\bigg)g(x)d\omega(x)\Bigg|\leq \mathcal{D}^\alpha \big|\big|f\big|\big|_{L^2(\sigma)}\big|\big|g\big|\big|_{L^2(\omega)}
$$
and that $\sqrt{\mathcal{A}_2^\alpha}+\sqrt{\mathcal{A}_2^{\alpha,*}}\approx\mathcal{D}^\alpha$, where $\mathcal{D}^\alpha$ is the best constant in the inequality above. 

Define the off-testing constants $\mathcal{T}_{\textit{off},\alpha}$ and $\mathcal{R}_{j,\textit{off},\alpha}$ in $\mathbb{R}^2$ by
\begin{equation*}\label{4}
\mathcal{T}_{\textit{off},\alpha}^2=\sup_Q \frac{1}{\omega (Q)}\int_{\R^2\backslash  Q}\bigg(\int_Q\frac{1}{|x-y|^{2-\alpha}}d\omega (y)\bigg)^2d\sigma (x)
\end{equation*}
\begin{equation*}\label{5}
\mathcal{R}_{m,\textit{off},\alpha}^2=\sup_Q \frac{1}{\omega (Q)}\int_{\R^2\backslash  Q}\bigg(\int_Q\frac{t_m-x_m}{|x-t|^{3-\alpha}}d\omega (t)\bigg)^2d\sigma (x), \quad 1\leq m\leq 2
\end{equation*}
for all cubes $Q \subset \R^2$ whose sides are parallel to the axes. 
\section{Main result}
We show that in two dimensions, we can find a pair of measures such that  $\mathcal{A}^\alpha_2$, $\mathcal{E}^\alpha_2$ and their dual conditions hold, but the off-testing condition fails. Thus, we prove that we cannot extend Hytonen's theorem in \cite{Hy} in higher dimensions. Indeed,  Theorem \ref{Riesz} provides a counterexample to the analogue of Hyt\"{o}nen's theorem in $\mathbb{R}^2$ as the Riesz transforms for $\alpha=0$ are the extensions of the Hilbert transform in higher dimensions.

\begin{theorem} \label{fracint}
For $0\leq \alpha<2$, there exists a pair of locally finite Borel measures $\sigma, \omega$  in $\R^2$ such that the fractional Muckenhoupt $\mathcal{A}^{\alpha}_2, \mathcal{A}^{\alpha,*}_2$ and the energy $\mathcal{E}_2^\alpha$, $\mathcal{E}_2^{\alpha,*}$ constants are finite but the off-testing constant $\mathcal{T}_{\textit{off},\alpha}$ is not.
\end{theorem}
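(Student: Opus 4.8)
The plan is to reduce this genuinely two-dimensional statement to a one-dimensional construction and then lift it to the plane. The starting observation is that when the $\R^2$ fractional kernel $|x-y|^{-(2-\alpha)}$ is evaluated along a one-dimensional slice it becomes a kernel $|s-t|^{-(2-\alpha)}$ of exponent $2-\alpha\in(0,2]$; for a genuine one-dimensional fractional integral this would correspond to order $\beta=\alpha-1$, which is negative when $\alpha<1$. This is the `wrong homogeneity' alluded to in the introduction, and it is precisely what places the slice kernel outside the scope of Hyt\"{o}nen's one-dimensional theorem and of its fractional extension in \cite{SaShUT}. So I would first manufacture one-dimensional seed measures adapted to this kernel, following the self-similar constructions of \cite{LaSaUr}, and only afterwards assemble the planar example.

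First I would construct self-similar seed measures $\mu,\nu$ on the line (Cantor-type measures built on a fixed contraction ratio) and establish three scale-invariant estimates against the slice kernel: a bound for the reproducing Poisson mass $\mathcal{P}^\alpha$, a bound for the standard Poisson mass $\mathrm{P}^\alpha$ together with the second-moment factor $\Vert x-m^\mu_I\Vert_{L^2(\mathbf{1}_I\mu)}$ that enters the energy functional, and a \emph{lower} bound for the fractional integral $\int |s-t|^{-(2-\alpha)}\,d\mu$. Self-similarity guarantees that each estimate holds uniformly across all scales, which is what will make the later summations tractable. I would then place these seeds on two adjacent parallel copies of the line, arranged self-similarly so that at every scale an $\omega$-charged cube sits next to a $\sigma$-charged cube at comparable distance (Figure 1); the common contraction produces a sequence of test cubes $Q_k$ realizing the same configuration at scales $r_k\to 0$.

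With this in hand the verification splits into three computations. The $\mathcal{A}^\alpha_2$ and $\mathcal{A}^{\alpha,*}_2$ constants are finite because the reproducing kernel $\big(|Q|^{1/2}/(|Q|^{1/2}+|x-x_Q|)^2\big)^{2-\alpha}$ is smooth up to $\partial Q$ and, combined with the slice estimate for $\mathcal{P}^\alpha$ and the normalization $|Q|^{1-\alpha/2}$, yields a geometric (hence summable) series in the scale parameter; the energy constants $\mathcal{E}^\alpha_2,\mathcal{E}^{\alpha,*}_2$ are finite for the same reason, with the extra variance factor supplying additional decay. By contrast, for $\mathcal{T}_{\textit{off},\alpha}$ I would bound $\int_{Q_k}|x-y|^{-(2-\alpha)}\,d\omega(y)$ from below at the $\sigma$-points just outside $Q_k$, where the kernel is most singular, square it, and integrate $d\sigma$; because off-testing squares this local interaction before integrating, the contribution of the $k$-th scale is bounded below by a term that does not decay in $k$, so $\sum_k$ diverges and $\mathcal{T}_{\textit{off},\alpha}=\infty$.

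The main obstacle is separating the divergent off-testing sum from the convergent $\mathcal{A}^\alpha_2$ sum, since the two quantities share the same far-field decay rate $\dist(x,Q)^{-2(2-\alpha)}$; the divergence can therefore only come from the boundary/near-field regime at scales finer than $|Q|^{1/2}$, where the linear reproducing-Poisson functional stays bounded but the squared, purely local fractional integral does not. Calibrating the seed's mass splitting and contraction ratio so that the quadratic local series diverges while the linear $\mathcal{A}^\alpha_2$ series and the variance-damped energy series both converge is the delicate point, and it is exactly here that the wrong homogeneity---which lets the slice kernel be more singular than any admissible one-dimensional fractional kernel---does the work. A final bookkeeping step is to verify the defining suprema over \emph{all} cubes $Q$, not merely the distinguished sequence $Q_k$, so that the finiteness of $\mathcal{A}^\alpha_2$, $\mathcal{A}^{\alpha,*}_2$ and the energy constants is genuine.
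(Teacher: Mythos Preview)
Your seed construction on the line and the ``wrong homogeneity'' observation are exactly right, and they match the paper. The gap is in how you pass to two dimensions.

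You propose a single self-similar planar configuration and a sequence of test cubes $Q_k$ at scales $r_k\to 0$, and you argue that ``the contribution of the $k$-th scale is bounded below by a term that does not decay in $k$, so $\sum_k$ diverges.'' But $\mathcal{T}_{\textit{off},\alpha}$ is a \emph{supremum} over cubes, not a sum. In a genuinely self-similar picture the off-testing functional at $Q_k$ is the same number for every $k$; you cannot accumulate divergence over scales. If instead you mean that for one fixed $Q$ the integral $\int_{Q^c}(\cdots)^2\,d\sigma$ itself diverges because $\sigma$ has infinitely many pieces near $\partial Q$, notice that your seed $\sigma$ lives \emph{inside} $I^0_1$, so once $Q$ swallows $I^0_1\times\{\gamma\}$ there is no $\sigma$-mass left in $Q^c$; and if you shrink $Q$ to keep $\sigma$ outside, the vertical separation $\gamma$ regularizes the kernel and the integral stays finite. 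In short, with two fixed parallel lines at distance $\gamma>0$ the off-testing constant is a finite number depending on $\gamma$, and no self-similar zoom changes that.

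The mechanism you are missing is that the paper's planar construction is \emph{not} self-similar. The divergent one-dimensional quantity is the \emph{testing} integral $\int_{I^0_1}(\ddot T\ddot\omega)^2\,d\ddot\sigma=\infty$, with $\ddot\sigma$ sitting \emph{inside} $I^0_1$. The two-dimensional trick is to lift $\ddot\sigma$ to height $\gamma>0$ so that the cube $Q=[0,1]\times[-1,0]$ contains all of $\omega$ but none of $\sigma$; then the $2$D off-testing integral over $Q^c$ equals, up to the regularization by $\gamma$, the divergent $1$D testing integral. For any fixed $\gamma$ this is finite, so one takes countably many horizontally well-separated copies with heights $\gamma_n\to 0$ chosen so that the $n$-th copy yields off-testing value $n$; the supremum over $n$ is then infinite. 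The large horizontal gaps $k_n$ between copies are what keep the $\mathcal{A}^\alpha_2$, $\mathcal{A}^{\alpha,*}_2$ and energy constants uniformly bounded despite the shrinking $\gamma_n$. Replacing your self-similar placement by this sequence-of-copies construction is the missing step.
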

\begin{theorem}\label{Riesz} 
For $0\leq \alpha<2$, there exists a pair of locally finite Borel measures $\sigma, \omega$  in $\R^2$ such that the fractional Muckenhoupt $\mathcal{A}^{\alpha}_2, \mathcal{A}^{\alpha,*}_2$ and the energy $\mathcal{E}_2^\alpha$, $\mathcal{E}_2^{\alpha,*}$ constants are finite but the off-testing constants $\mathcal{R}_{m,\textit{off},\alpha}$ are not.
\end{theorem}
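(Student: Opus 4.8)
The plan is to construct an explicit pair of measures in $\R^2$ concentrated on two nearby line segments, reducing the two-dimensional problem to the one-dimensional estimates of \cite{LaSaUr} already invoked for Theorem \ref{fracint}. Concretely, I would place $\omega$ and $\sigma$ on two parallel (or adjacent but disjoint) horizontal segments separated by a small gap, as suggested by Figure 1, so that the support of $\omega$ lies in a cube $Q$ while the support of $\sigma$ lies in $\R^2 \setminus Q$. The masses would be distributed self-similarly along these segments (a Cantor-type or dyadic redistribution of point masses) with weights chosen exactly as in the one-dimensional construction, but now read as the horizontal coordinate. The whole point of the ``wrong homogeneity'' normalization advertised in the introduction is that along a single line the kernel $(t_m-x_m)/|x-t|^{3-\alpha}$ behaves, for the relevant component, like a one-dimensional Riesz/fractional kernel of the correct order, so that the estimates port over for every $0<\alpha<2$.

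The key steps, in order, are as follows. First I would verify that the fractional Muckenhoupt conditions $\mathcal{A}^\alpha_2$ and $\mathcal{A}^{\alpha,*}_2$ hold: since the reproducing Poisson integral $\mathcal{P}^\alpha(Q,\mu)$ and the masses decay geometrically away from the supports, this reduces to a summable geometric estimate over the scales of the construction, essentially identical to the one-dimensional computation. Second, I would check the energy conditions $\mathcal{E}^\alpha_2$ and $\mathcal{E}^{\alpha,*}_2$; here the factor $\|x - m^\omega_{Q_r}\|_{L^2(\mathbf{1}_{Q_r}\omega)}$ measures the $\omega$-variance of the horizontal coordinate inside each subcube $Q_r$, and because the construction is self-similar this variance is controlled by the scale of $Q_r$, again yielding a convergent sum. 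Third, and most importantly, I would show the off-testing constant $\mathcal{R}_{m,\textit{off},\alpha}$ diverges: fixing the cube $Q$ that captures $\supp\omega$, I would lower-bound the inner Riesz integral $\int_Q \frac{t_m - x_m}{|x-t|^{3-\alpha}}\,d\omega(t)$ at points $x \in \supp\sigma$ by exploiting that the horizontal displacements $t_m - x_m$ all have the \emph{same sign} (no cancellation), so the integral is comparable to a genuine one-dimensional fractional integral that blows up, and then integrate its square against $\sigma$.

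The main obstacle I expect is the third step, controlling the \emph{lack of cancellation} in the Riesz kernel. Unlike the scalar fractional integral $I^\alpha$ of Theorem \ref{fracint}, the Riesz transform carries the signed factor $(t_m - x_m)$, so the geometry must be arranged so that for the chosen component $m$ the sign is constant over the relevant region; this is exactly why one places the two segments so that $\sigma$ sits strictly to one side of $\supp\omega$ in the $m$-th coordinate, forcing $t_m - x_m$ to one sign and preventing the transform from decaying. A secondary technical point is confirming that the \emph{same} pair of measures makes all of $\mathcal{A}^\alpha_2$, $\mathcal{E}^\alpha_2$, their duals \emph{finite} while $\mathcal{R}_{m,\textit{off},\alpha}$ diverges; since the divergence comes from summing a non-summable series built out of the same geometric data that the $\mathcal{A}^\alpha_2$ and energy bounds render summable, the delicate balance is to tune the mass-decay exponent so that the testing sum barely diverges while the others barely converge, mirroring the borderline choice made in the one-dimensional counterexample of \cite{LaSaUr}.
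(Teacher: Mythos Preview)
Your overall architecture matches the paper's: Cantor-type measures on nearby parallel segments, reduction to one-dimensional estimates for $\mathcal{A}_2^\alpha$ and the energy constants, and a limiting argument in the gap $\gamma_n$ to force the off-testing to diverge. The genuine gap is in your third step, where you propose to neutralize the sign of the Riesz kernel by arranging $\supp\sigma$ strictly to one side of $\supp\omega$ in the $m$-th coordinate. With the Figure~1 geometry (both measures spread over the \emph{same} horizontal interval, vertical separation $\gamma_n$), constant sign is only available for the transverse component $m=2$, where $t_2-x_2=-\gamma_n$. But then the inner integral is $\gamma_n\int ((x_1-y_1)^2+\gamma_n^2)^{-(3-\alpha)/2}\,d\ddot\omega(y_1)$, and a scale-by-scale computation with the weights $s^k_j=(2/s_0^2)^k$ shows that $\sum_{k,j} s^k_j\bigl(R_2^\alpha\omega(\ddot z^k_j,\gamma_n)\bigr)^2$ stays bounded uniformly in $\gamma_n$: the factor $\gamma_n$ in the numerator exactly offsets the extra power of $|x-t|$ in the denominator, so the $\gamma_n\to 0$ limit is $0$, not $\infty$. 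If instead you separate the segments in the $x_1$-direction to force $t_1-x_1$ to one sign, then $|x-t|$ is bounded below by the horizontal gap and the off-testing integral is trivially finite. In neither case is the Riesz integral comparable to the one-dimensional fractional integral $\ddot T\ddot\omega$ that drives the divergence in Theorem~\ref{fracint}.

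The paper resolves this differently. It keeps the Figure~1 geometry and works with the tangential component $m=1$, accepting that $t_1-x_1$ changes sign. The point is that cancellation is only fatal if the $\sigma$-masses sit at the \emph{centers} $\ddot z^k_j$ of the removed gaps $G^k_j=(a^k_j,b^k_j)$; the paper instead shifts each mass to an off-center point $\dot z^k_j=a^k_j+cb\bigl(\tfrac{1-b}{2}\bigr)^{k}$ near one endpoint. A separate lemma (Lemma~\ref{lemma}) shows that for a suitable $0<c<1$ depending only on $\alpha$, the signed one-dimensional operator $\ddot R\ddot\omega(\dot z^k_j)=\int(\dot z^k_j-y)|\dot z^k_j-y|^{-(3-\alpha)}\,d\ddot\omega(y)$ is still of size $\approx (s_0/2)^k$, because the $\ddot\omega$-mass on the near side of $\dot z^k_j$ dominates. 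With the modified measure $\dot\sigma=\sum_{k,j}s^k_j\delta_{\dot z^k_j}$ the testing sum diverges exactly as in \eqref{testinfinity}, and since $\dot z^k_j$ remains a fixed fraction of $|G^k_j|$ away from $\partial G^k_j$, the $\mathcal{A}_2^\alpha$ and energy verifications carry over verbatim from Theorem~\ref{fracint}.
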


\section{Proofs Of The Theorems}

We begin with the proof of Theorem \ref{fracint}. The proof of Theorem \ref{Riesz} will be very similar and we will only have to deal with the cancellation occurring in the kernel with Lemma \ref{lemma} being useful.

\begin{proof}[Proof of Theorem \ref{fracint}]
First we build two measures in $\R$, generalizing the work done in \cite{LaSaUr}, and then they will be used for our two dimensional construction.
$$
\underline{\textsc{The One-Dimensional Construction}}
$$
Given $0\leq\alpha<2$, choose  $\frac{1}{3}\leq b<1$ such that $\frac{1}{9}\leq \left(\frac{1-b}{2}\right)^{2-\alpha}\leq\frac{1}{3}$. Let $s_0^{-1}=\left(\frac{1-b}{2}\right)^{2-\alpha}$. Recall the middle-$b$ Cantor set $\mathrm{E}_b$ and the Cantor measure $\ddot{\omega}$ on the closed interval $I^0_1=[0,1]$. At the $k$th generation in the construction, there is a collection $\{I_j^k\}_{j=1}^{2^k}$ of $2^k$ pairwise disjoint closed intervals of length $|I_j^k|=\left(\frac{1-b}{2}\right)^k$. The Cantor set is defined by 
$\mathrm{E}_b=\bigcap_{k=1}^{\infty}\bigcup_{j=1}^{2^k}I_j^k$ and the Cantor measure $\ddot{\omega}$ is the unique probability measure supported in $\mathrm{E}$ with the property that is equidistributed among the intervals $\{I_j^k\}_{j=1}^{2^k}$ at each scale $k$, i.e
$$
\ddot{\omega}(I^k_j)=2^{-k},\ \ \ \  k \geq 0, 1 \leq j \leq 2^k.
$$
We denote the removed open middle $b$th of $I_j^k$ by $G^k_j$ and by $\ddot{z}^k_j$ its center. Following closely \cite{LaSaUr}, we define
$$
\ddot{\sigma}=\sum_{k,j}s^k_j\delta_{\ddot{z}^k_j}
$$
where the sequence of positive numbers $s^k_j$ is chosen to satisfy $\displaystyle \frac{s^k_j\ddot{\omega}(I^k_j)}{|I^k_j|^{4-2\alpha}}=1$, i.e.
$$
s^k_j=\left(\frac{2}{s_0^2}\right)^{k}, \ \  k \geq 0,\ 1 \leq j \leq 2^k.
$$
\textsc{The Testing Constant is Unbounded}.
Consider the following operator
$$
\ddot{T}f(x)=\int_\R\frac{f(y)}{|x-y|^{2-\alpha}}dy
$$
Note that
\begin{eqnarray*}\label{testing value}
\ddot{T}\ddot{\omega}(\ddot{z}^k_1)
\!=\!
\int_{I_1^0}\frac{d\ddot{\omega}(y)}{|\ddot{z}_1^k-y|^{2-\alpha}}
\!\geq\!
\int_{I_1^k}\frac{d\ddot{\omega}(y)}{|\ddot{z}_1^k-y|^{2-\alpha}}
\!\geq\!
\frac{\ddot{\omega} (I_1^k)}{\left(\frac{1}{2}\left(\frac{1-b}{2}\right)^{k}\right)^{2-\alpha}}
\!\approx\!
\left(\frac{s_0}{2}\right)^k
\end{eqnarray*}
since $|\ddot{z}_1^k-y|\leq|\ddot{z}_1^k|$ for $y\in I_1^k$ and $\ddot{z}_1^k=\frac{1}{2}(\frac{1-b}{2})^{k}$. Similar inequalities hold for the rest of $\ddot{z}^k_j$. This implies that the following testing condition fails:
\begin{eqnarray}
\int_{I_1^0}\left(\ddot{T}(\mathbf{1}_{I_1^0}\ddot{\omega})(x)\right)^2d\ddot{\sigma}(y)
\gtrsim
\sum_{k=1}^
\infty\sum_{j=1}^{2^k}s^k_j\cdot \left(\frac{s_0}{2}\right)^{2k}
= \label{testinfinity}
\sum_{k=1}^\infty\sum_{j=1}^{2^k}\frac{1}{2^k}=\infty 
\end{eqnarray}
\textsc{The $\ddot{\mathcal{A}}_2$ Condition}.
Let us now define 
$$
\ddot{\mathcal{P}}(I,\mu)=\int_\mathbb{R}\left(\frac{|I|}{\left(|I|+|x-x_I|\right)^2}\right)^{2-\alpha}\!\!\!\!d\mu(x)
$$
and the following variant of the $A_2^\alpha$ condition:
$$
\ddot{\mathcal{A}}_2^\alpha(\ddot{\sigma},\ddot{\omega})=\sup_I
\ddot{\mathcal{P}}(I,\ddot{\sigma})  \cdot
\ddot{\mathcal{P}}(I,\ddot{\omega})
$$
where the supremum is taken over all intervals in $\mathbb{R}$. We verify that $\ddot{\mathcal{A}}_2^\alpha$ is finite for the pair $(\ddot{\sigma},\ddot{\omega})$. The starting point is the estimate

$$
\ddot{\sigma}(I^\ell_r)=\!\!\!\! \sum_{(k,j):\ddot{z}^k_j \in I^\ell_r} \!\!\!\!\! s^k_j
=
\sum_{k=l}^\infty2^{k-\ell}s_j^k
=
2^{-\ell}\sum_{k=l}^\infty \left(\frac{4}{s_0^2}\right)^{\!k}
\!\approx\!
\left(\frac{2}{s_0^2}\right)^{\ell}=s^\ell_r
$$
and from this, it immediately follows,
\begin{equation}\label{varclass_a2}
\frac{\ddot{\sigma}(I^\ell_j)\ddot{\omega}(I^\ell_j)}{|I_j^\ell|^{4-2\alpha}}\approx
\frac{s_j^\ell \ddot{\omega}(I^\ell_j)}{|I_j^\ell|^{4-2\alpha}}=1,\ \text{for }\ell \geq 0,\ 1 \leq j \leq 2^\ell.
\end{equation}
Now from the definition of $\ddot{\sigma}$ we get,
\begin{eqnarray}
\label{poisson sigma}
\ddot{\mathcal{P}}(I^\ell_r,\ddot{\sigma})
&\leq&
\frac{\ddot{\sigma}(I^\ell_r)}{|I^\ell_r|^{2-\alpha}}+
\int_{I_1^0\backslash I^\ell_r}\left(\frac{|I^\ell_r|}{\left(|I^\ell_r|+|x-x_{I^\ell_r}|\right)^2}\right)^{2-\alpha}\!\!\!\!d\ddot{\sigma}(x)\\
&\leq&
\frac{\ddot{\sigma}(I^\ell_r)}{|I^\ell_r|^{2-\alpha}}+
\sum_{m=0}^{\ell}\sum_{k=m}^\infty \frac{2^{k-m}s^k_j\ |I^\ell_r|^{2-\alpha}}{\left(|I^\ell_r|+b\left(\frac{1-b}{2}\right)^m\right)^{4-2\alpha}}\notag\\
&\lesssim &
\frac{\ddot{\sigma}(I^\ell_r)}{|I^\ell_r|^{2-\alpha}}+
\sum_{m=0}^{\ell}\frac{2^{-m}|I^\ell_r|^{2-\alpha}\left(\frac{4}{s_0^2}\right)^m}{\left(b\left(\frac{1-b}{2}\right)^{m-\ell}|I^\ell_r|\right)^{4-2\alpha}}\notag\\
&=&
\frac{\ddot{\sigma}(I^\ell_r)}{|I^\ell_r|^{2-\alpha}}+
\frac{b^{2\alpha-4}}{|I^\ell_r|^{2-\alpha}} \left(\frac{1}{s_0^2}\right)^{\!\!\ell} \sum_{m=0}^{\ell}2^m\notag\\
&\lesssim&
\frac{\ddot{\sigma}(I^\ell_r)}{|I^\ell_r|^{2-\alpha}}+
\frac{s_r^\ell}{|I^\ell_r|^{2-\alpha}} 
\approx
\frac{\ddot{\sigma}(I^\ell_r)}{|I^\ell_r|^{2-\alpha}}\notag
\end{eqnarray}
and using the uniformity of $\ddot{\omega}$,
\begin{eqnarray}
\label{poisson omega}
\ddot{\mathcal{P}}(I^\ell_r,\ddot{\omega})
&\leq&
\frac{\ddot{\omega}(I^\ell_r)}{|I^\ell_r|^{2-\alpha}}
+ \int_{I_1^0\backslash I^\ell_r}\left(\frac{|I^\ell_r|}{\left(|I^\ell_r|+|x-x_{I^\ell_r}|\right)^2}\right)^{2-\alpha}\!\!\!\!d\ddot{\omega}(x)\\
&\leq&
\frac{\ddot{\omega}(I^\ell_r)}{|I^\ell_r|^{2-\alpha}}+
\sum_{k=1}^\ell \frac{|I^\ell_r|^{2-\alpha}\ \ddot{\omega}(I^k_{j_k})}{\left(|I^\ell_r|+b\left(\frac{1-b}{2}\right)^{k-1}\right)^{4-2\alpha}}\notag\\
&\leq&
\frac{\ddot{\omega}(I^\ell_r)}{|I^\ell_r|^{2-\alpha}}+
\sum_{k=1}^\ell \frac{|I^\ell_r|^{2-\alpha}\ \ddot{\omega}(I^k_{j_k})}{\left(b\left(\frac{1-b}{2}\right)^{k-1-\ell}|I^\ell_r|\right)^{4-2\alpha}}\notag
\end{eqnarray}
\begin{eqnarray}
&\lesssim&
\frac{\ddot{\omega}(I^\ell_r)}{|I^\ell_r|^{2-\alpha}}+
\frac{2^{-\ell}}{|I^\ell_r|^{2-\alpha}} =
2\frac{\ddot{\omega}(I^\ell_r)}{|I^\ell_r|^{2-\alpha}}\notag ,
\end{eqnarray}
where $I^k_{j_k}\subset I^{k-1}_t$,  $I^\ell_r\subset I^{k-1}_t $ and $I^k_{j_k}\cap I^\ell_r=\emptyset$, and where all the implied constants in the above calculations depend only on $\alpha$. From (\ref{poisson sigma}), (\ref{poisson omega}) and \eqref{varclass_a2}, we see that
$$
\ddot{\mathcal{P}}(I_r^\ell,\ddot{\sigma}) 
\ddot{\mathcal{P}}(I_r^\ell,\ddot{\omega})\lesssim 1.
$$
Let us now consider an interval $I\subset I_1^0$ and let $A>1$ be fixed. Then, let $k$ be the smallest integer such that $\ddot{z}_j^k\in AI$; if there is no such $k$, then $AI\subsetneqq G_{j}^\ell$, for some $\ell$. We have the following cases:\\
\textbf{Case 1.} Assume that $I\subset AI \subsetneqq  G_j^k \subset I_j^k$. If $|x_I-\ddot{z}^k_j|\leq \dist(x_I, \partial G^k_j)$ then,
\begin{eqnarray}
\label{first case}
&&
\ddot{\mathcal{P}}(I,\ddot{\sigma})\ddot{\mathcal{P}}(I,\ddot{\omega})=  |I|^{4-2\alpha}\int_{I^0_1}\frac{d\ddot{\sigma}(x)}{(|I|+|x-x_I|)^{4-2\alpha}}\int_{I^0_1}\frac{d\ddot{\omega}(x)}{(|I|+|x-x_I|)^{4-2\alpha}}\\
&\lesssim&
|I|^{4-2\alpha}\left(\frac{s^k_j}{|I|^{4-2\alpha}}+\frac{1}{|I^k_j|^{2-\alpha}}\int_{I^0_1\backslash G^k_j}\frac{|I^k_j|^{2-\alpha}d\ddot{\sigma}(x)}{(|I^k_j|+|x-x_{I^k_j}|)^{4-2\alpha}}\right)\frac{\ddot{\mathcal{P}}(I^k_j,\ddot{\omega})}{|I^k_j|^{2-\alpha}}\notag \\
&\lesssim&
\frac{|I|^{4-2\alpha}}{|I^k_j|^{2-\alpha}}\left(\frac{s^k_j}{|I|^{4-2\alpha}}+\frac{\ddot{\sigma}(I_j^k)}{|I^k_j|^{4-2\alpha}}\right)\frac{\ddot{\omega}(I^k_j)}{|I^k_j|^{2-\alpha}}
\lesssim
\frac{\ddot{\sigma}(I_j^k)\ddot{\omega}(I^k_j)}{|I^k_j|^{4-2\alpha}}\approx 1\notag 
\end{eqnarray}
where in the first inequality we used the fact that $|x-x_I|\approx|x-\ddot{z}_j^k|\gtrsim |I_j^k|$ when $x\notin G^k_j$, since $x_I$ is ``close" to the center of $G^k_j$, and for the second inequality we used \eqref{poisson sigma} and \eqref{poisson omega}. 

If $|x_I-\ddot{z}^k_j|> \dist(x_I, \partial G^k_j)$, we can assume 
$
b\left(\frac{1-b}{2}\right)^{m-1}\leq |I|\leq b\left(\frac{1-b}{2}\right)^m
$
for some $m>k$, since for $m=k$ we have $|I|\approx |I^k_j|$, $|x-x_I|\gtrsim|x-x_{I^k_j}|$ for $x \notin G^k_j$ and we can repeat the proof of \eqref{first case}. Now let $I^m_t$ be the $m$-th generation interval that is closer to $I$ that touches the boundary of $G^k_j$. We have, using $|x_{I^m_t}-\ddot{z}^\ell_j|\lesssim|x_I-\ddot{z}^\ell_j|$, for all $\ell \geq 1, 1\leq j\leq 2^\ell$,
$
\ddot{\mathcal{P}}(I,\ddot{\sigma})\lesssim\ddot{\mathcal{P}}(I^m_t,\ddot{\sigma})
$ and $\ddot{\mathcal{P}}(I,\ddot{\omega})\lesssim\ddot{\mathcal{P}}(I^m_t,\ddot{\omega})$, which imply 
$$
\ddot{\mathcal{P}}(I,\ddot{\sigma})\ddot{\mathcal{P}}(I,\ddot{\omega})\lesssim 1.
$$

\textbf{Case 2.} Now assume $G_j^k\subset AI$. If $I^k_j \cap I=\emptyset$, then, using the minimality of $k$, $I \subset G^m_t$ for some $m<k$ and we can repeat the proof of \eqref{first case}. If $I^k_j \cap I \neq \emptyset$ then $|I| \lesssim |I^k_j| $ since otherwise $AI$ would contain $\ddot{z}^{k-1}_t$, contradicting the minimality of k if we fix $A$ big enough depending only on $\alpha$. Hence we have:
$$
|G_j^k|+|x-\ddot{z}_j^k|\leq |G_j^k|+|x_I-\ddot{z}_j^k|+|x-x_I|\leq 
\left(A+\frac{A}{2}\right)|I|+|x-x_I|
$$
which implies that
$$
\ddot{\mathcal{P}}(I,\ddot{\sigma})               \!\lesssim\!
\int_{I_1^0}\frac{|I|^{2-\alpha}}{\left(|G_j^k|+|x-\ddot{z}_j^k|\right)^{4-2\alpha}}
d\ddot{\sigma}(x) \!\lesssim\!
\frac{|I|^{2-\alpha}}{|I_{j}^k|^{2-\alpha}} 
\!\int_{I_1^0}\!\frac{|I_j^k|^{2-\alpha}}{\left(|I_j^k|+|x-\ddot{z}_j^k|\right)^{4-2\alpha}}
d\ddot{\sigma}(x)
$$
and similarly
$$
\ddot{\mathcal{P}}(I,\ddot{\omega})                    \lesssim 
\frac{|I|^{2-\alpha}}{|I_{t}^k|^{2-\alpha}} 
\ddot{\mathcal{P}}(I_{j}^k,\ddot{\omega})                \leq
\ddot{\mathcal{P}}(I_{j}^k,\ddot{\omega}) .
$$
which implies
$$
\ddot{\mathcal{P}}(I,\ddot{\sigma})   \ddot{\mathcal{P}}(I,\ddot{\omega})   \lesssim 1
$$
\textbf{Case 3.} If neither $G_j^k\cap AI\neq G_j^k$ nor $G_j^k\cap AI\neq AI$, note that $G_j^k\subset 3AI$ and we repeat again the proof of Case 2. 

Thus, for any interval $I\subset I_1^0$, we have shown that $\ddot{\mathcal{P}}(I,\ddot{\sigma})\ddot{\mathcal{P}}(I,\ddot{\omega})\lesssim 1$, which implies
\begin{equation}\label{var_a2_bdd}
\ddot{\mathcal{A}_2^{\alpha}}(\ddot{\sigma},\ddot{\omega})<\infty
\end{equation}

\textsc{The Energy Constants $\ddot{\mathcal{E}}$ and $\ddot{\mathcal{E}}^{*}$}.
Now define the following variant of the energy constants
\begin{eqnarray*}
\ddot{\mathcal{E}} &=& \sup_{I=\dot{\bigcup}I_r} \frac{1}{\ddot{\sigma}(I)} \sum_{r\geq 1} \ddot{\omega}(I_r)E(I_r,\ddot{\omega})^2\ddot{\mathrm{P}}(I_r,\mathbf{1}_I\ddot{\sigma})^2\\
\ddot{\mathcal{E}}^{*} &=& \sup_{I=\dot{\bigcup}I_r} \frac{1}{\ddot{\omega}(I)} \sum_{r\geq 1} \ddot{\sigma}(I_r)E(I_r,\ddot{\sigma})^2\ddot{\mathrm{P}}(I_r,\mathbf{1}_I\ddot{\omega})^2
\end{eqnarray*}
where the supremum is taken over the different intervals $I$ and all the different decompositions of $I=\dot{\bigcup}_{r\geq 1}I_r$, and 
$$
\ddot{\mathrm{P}}(I,\mu)=\int_\mathbb{R}\frac{|I|}{\left(|I|+|x-x_I|\right)^{3-\alpha}}d\mu(x),
$$
$$
E(I,\mu)^2=\frac{1}{2}\frac{1}{\mu(I)^2}\int_{I}\int_{I} \frac{(x-x')^2}{|I|^2} d\mu(x')d\mu(x)=\frac{1}{\mu(I)}\cdot \left\Vert x-m_I^\mu \right\Vert^2_{L^2(\mathbf{1}_I\mu)}\leq 1.
$$
We first show that $\ddot{\mathcal{E}}$ is bounded. We have
\begin{eqnarray*}
\ddot{\mathrm{P}}(I,\ddot{\sigma})
=
\int\frac{|I|}{\left(|I|+|x-x_I|\right)^{3-\alpha}}d\ddot{\sigma}(x)
\!\!\!\!&\lesssim&\!\!\!\!
\sum_{n=0}^\infty \frac{\ddot{\sigma}\big((2^n+1)I\big)}{(2^n)|2^nI|^{2-\alpha}}\\
&\leq&\!\!\!\!
\sum_{n=0}^\infty \inf_{x \in I}M^\alpha\ddot{\sigma}(x)2^{-n}
\lesssim
\inf_{x \in I}M^\alpha\ddot{\sigma}(x)
\end{eqnarray*}
where 
$\displaystyle
M^\alpha\mu(x)=\sup_{I\ni x}\frac{1}{|I|^{2-\alpha}}\int_Id\mu
$
and the implied constants depend only on $\alpha$. Thus, given an interval $\displaystyle I=\dot{\cup}_{r\geq 1}I_r$, we have:
$$
\sum_{r \geq 1}\ddot{\omega}(I_r)\ddot{\mathrm{P}}^2(I_r,\mathbf{1}_{I}\ddot{\sigma})
\leq 
\sum_{r\geq 1}\ddot{\omega}(I_r) \inf_{x \in I}\left(M^\alpha\mathbf{1}_{I}\ddot{\sigma}\right)^2(x)
\leq 
\int_{I} \left(M^\alpha\mathbf{1}_{I}\ddot{\sigma}\right)^2(x)d\ddot{\omega}(x)
$$
and so we are left with estimating the right hand term of the above inequality. We will prove the inequality
\begin{equation}
\label{maximal}
\int_{I^\ell_r} \left(M^\alpha\mathbf{1}_{I^l_r}\ddot{\sigma}\right)^2(x)d\ddot{\omega}(x)\leq C \ddot{\sigma}(I^\ell_r).
\end{equation}
where the constant $C$ depends only on $\alpha$.
This will be enough, since for an interval $I$ containing a point mass $\ddot{z}^\ell_r$ but no masses $\ddot{z}^k_j$ for $k<\ell$, we have
\begin{eqnarray*}
 \int_{I} \left(M^\alpha\ddot{\sigma}\right)^2(x)d\ddot{\omega}(x)=
 \int_{I\cap I^\ell_r} \left(M^\alpha\mathbf{1}_{I\cap I^\ell_r}\ddot{\sigma}\right)^2(x)d\ddot{\omega}(x)
 &\leq&
 \int_{I^\ell_r}\left(M^\alpha\mathbf{1}_{I^\ell_r}\ddot{\sigma}\right)^2(x)d\ddot{\omega}(x)\\
 &\leq&
 \ddot{\sigma}(I^\ell_r)\approx \ddot{\sigma} (I)
\end{eqnarray*}
Since the measure $\ddot{\omega}$ is supported in the Cantor set $\mathrm{E}_b$, we can use the fact that for $x \in I^\ell_r\cap \mathrm{E}_b$,
$$
M^\alpha(\mathbf{1}_{I^\ell_r}\ddot{\sigma})(x)\lesssim\!\!\!
\sup_{\left( k,j\right) :x\in I_{j}^{k}}\frac{1}{\left\vert
I_{j}^{k}\right\vert^{2-\alpha} }\int_{I_{j}^{k}\cap I_{r}^{\ell }}\!\!\!\!\!d\ddot{\sigma} 
\approx \!\!\!\!\!
\sup_{\left( k,j\right) :x\in I_{j}^{k}}\!\!\!\!\frac{s_0
^{-2(k\vee \ell) } 2^{k\vee \ell}}{s_0
^{-k}}\approx \frac{\ddot{\sigma}(I^\ell_r)}{|I^\ell_r|^{2-\alpha}}\approx\left(\frac{2}{s_0}\right)^\ell
$$
Fix m and let the approximations $\ddot{\omega} ^{\left(
m\right) }$ and $\ddot{\sigma}^{\left( m\right) }$ to the measures $\omega $
and $\ddot{\sigma}$ given by 

\begin{eqnarray*}
d\ddot{\omega} ^{\left( m\right) }\left( x\right)
=
\sum_{i=1}^{2^{m}}2^{-m}\frac{1%
}{\left\vert I_{i}^{m}\right\vert }\mathbf{1}_{I_{i}^{m}}\left( x\right) dx
\ \text{  and  }\ 
\ddot{\sigma}^{\left( m\right) }
=
\sum_{k<m}\sum_{j=1}^{2^{k}}s_{j}^{k}\delta _{z_{j}^{k}}.  \notag
\end{eqnarray*}%
For these approximations we have in the same way the estimate for $ x\in \bigcup_{i=1}^{2^{m}}I_{i}^{m}$,
\begin{equation*}
M^\alpha\left( \mathbf{1}_{I_{r}^{\ell }}\ddot{\sigma}^{\left( m\right)
}\right) \left( x\right) 
\lesssim \!\!\!\!\!
\sup_{\left( k,j\right) :x\in I_{j}^{k}}\frac{1}{\left\vert
I_{j}^{k}\right\vert^{2-\alpha} }\int_{I_{j}^{k}\cap I_{r}^{\ell }}\!\!\!d\ddot{\sigma} \approx
\!\!\!\sup_{\left( k,j\right) :x\in I_{j}^{k}}\!\!\!\frac{\left( \frac{1}{s_0}\right)
^{k\vee \ell }\left( \frac{2}{s_0}\right) ^{k\vee \ell }}{\left( \frac{1}{s_0}%
\right) ^{k}}
\leq
C\left( \frac{2}{s_0}\right) ^{\ell }
\end{equation*}%
Thus for each $m\geq n \geq \ell$ we have%
\begin{eqnarray*}
\int_{I_{r}^{\ell }}\!\!\! M^\alpha\left( \mathbf{1}_{I_{r}^{\ell }}\ddot{\sigma}%
^{\left( n\right) }\right) ^{2}\!\!d\ddot{\omega} ^{\left( m\right) } 
\!\leq\!
C\!\!\!\sum_{i:I_{i}^{m}\subset I_{r}^{\ell }}\!\!\!\left( \frac{2}{s_0}\right)^{2\ell
}\!\!\!\!\!2^{-m}
\!=\!
C2^{m-\ell }\!\!\left( \frac{2}{s_0}\right) ^{2\ell }\!\!\!\!2^{-m}=Cs_{r}^{\ell
}\approx C\int_{I_{r}^{\ell }}d\ddot{\sigma}
\end{eqnarray*}%
Now since $\ddot{\omega}^m$ converges weakly to $\ddot{\omega}$ and using the fact that $M^\alpha$ is lower semi-continuous we get:
$$
\int_{I_{r}^{\ell }}\!\!\! M^\alpha\left( \mathbf{1}_{I_{r}^{\ell }}\ddot{\sigma}%
^{\left( n\right) }\right) ^{2}\!\!d\ddot{\omega} \leq  \liminf\limits_{m\rightarrow \infty} \int_{I_{r}^{\ell }}\!\!\! M^\alpha\left( \mathbf{1}_{I_{r}^{\ell }}\ddot{\sigma}%
^{\left( n\right) }\right) ^{2}\!\!d\ddot{\omega} ^{\left( m\right)} \leq C\ddot{\sigma}(I_{r}^{\ell })
$$
Now, taking $n\rightarrow \infty$, by monotone convergence we get (\ref{maximal}). This proves
\begin{equation}\label{pivotal}
 \sum_{r \geq 1}\ddot{\omega}(I_r)\ddot{\mathrm{P}}^2(I_r,\mathbf{1}_{I}\ddot{\sigma})\leq C\ddot{\sigma}(I)   \end{equation}
which in turn implies $\ddot{\mathcal{E}}<\infty$ as $E(I_r,\ddot{\omega})\leq 1$.

Finally, we show that the dual energy constant $\ddot{\mathcal{E}}^{*}$ is finite. Let us show that for $I\subset I_1^0$
\begin{equation}
\ddot{\sigma} (I)E(I,\ddot{\sigma} )^{2}\ddot{\mathrm{P}}(I,\ddot{\omega} )^{2}\lesssim
\ddot{\omega} (I).  \label{e.gettingE}
\end{equation}
as if we let $\{I_{r}\;:\;r\geq 1\}$ be any partition of $I$, (\ref{e.gettingE}) gives
\begin{equation*}
\sum_{r\geq 1}\ddot{\sigma} (I_{r})E(I_{r},\ddot{\sigma} )^{2}\ddot{\mathrm{P}}
(I_{r},\ddot{\omega} )^{2}\lesssim \sum_{r\geq 1}\ddot{\omega} (I_{r})=\ddot{\omega} (I_{})\ .
\end{equation*}

Now let us establish (\ref{e.gettingE}). We can assume that $E(I,\ddot{\sigma} )\neq 0$. Let $k$ be the smallest
integer for which there is a $r$ with $\ddot{z}_{r}^{k}\in I$. And let $n$
be the smallest integer so that for some $s$ we have $\ddot{z}_{s}^{k+n}\in
I $ and $\ddot{z}_{s}^{k+n}\neq \ddot{z}_{r}^{k}$. We have that 
\begin{eqnarray*}
E(I,\ddot{\sigma} )^2  &=& \frac{1}{2}\frac{1}{\ddot{\sigma}(I)^2}\int_I\int_I\frac{|x-x'|^2}{|I|^2}d\ddot{\sigma}(x)d\ddot{\sigma}(x')\\
&=&
\frac{1}{\ddot{\sigma}(I)^2}\left[\ddot{\sigma}(\ddot{z}_{r}^{k})\int_I\frac{|x-\ddot{z}_{r}^{k}|^2}{|I|^2}d\ddot{\sigma}(x)+\int_I\int_{I\backslash\{\ddot{z}_{r}^{k}\}}\frac{|x-x'|^2}{|I|^2}d\ddot{\sigma}(x)d\ddot{\sigma}(x')\right]\\
&\lesssim&
\frac{\ddot{\sigma}(\ddot{z}_{r}^{k})\ddot{\sigma}(I\backslash\{\ddot{z}_{r}^{k}\})}{\ddot{\sigma}(I)^2}+\frac{\ddot{\sigma}(I\backslash\{\ddot{z}_{r}^{k}\})}{\ddot{\sigma}(I)}\lesssim \left( \frac{2}{s_0^2}\right) ^{n}
\end{eqnarray*}
Finally, $\ddot{\sigma}( I)\approx \left( \frac{2}{s_0^2}\right) ^{k}$, $%
\ddot{\omega}( I)\approx 2^{-k-n}$, and $\ddot{\mathrm{P}}(I,\ddot{\omega} )\approx \left( \frac{s_0}{2}%
\right) ^{k}$, which proves \eqref{e.gettingE}.

$$
\underline{\textsc{The Two Dimensional Construction}}
$$
It is time now to define the two dimensional measures that prove the statement of Theorem \ref{fracint}. For any set $E\subset \mathbb{R}^2$ let
$$
\omega(E)=\sum_{n=0}^\infty \ddot{\omega}_n(E)
$$
where $\ddot{\omega}_0(E)=\ddot{\omega}(E_x\cap I_1^0)$, $E_x$ the projection of $E$ on the x-axis, and $\ddot{\omega}_n$ are copies of $\ddot{\omega}_0$ at the intervals $[a_n,a_n+1]\times \{0\}$ with $k_n=a_{n+1}-(a_n+1)$ to be determined later. In the same way, let
$$
\sigma(E)=\sum_{n=0}^\infty \ddot{\sigma}_n(E)
$$
where $\ddot{\sigma}_0(E)=\ddot{\sigma}([E\cap (I_1^0\times\{\gamma_0\})]_x)$, and $\ddot{\sigma}_n$ are copies of $\ddot{\sigma}_0$ at the intervals $[a_n,a_n+1]\times \{\gamma_n\}$, where the height $\gamma_n$ will be determined later.
\begin{center}
\begin{tikzpicture}
\label{Figure 2.1.}
\color{blue}
\draw (-6,0) -- (-4,0);
\draw (-3,0) -- (-1,0);
\draw (0.3,0) -- (2.3,0);
\draw (4.05,0) -- (6.05,0);
\node (f) at (-5,-0.2) {$\omega$};
\node (g) at (-2,-0.2) {$\omega$};
\node (h) at (1.3,-0.2) {$\omega$};
\node (i) at (5.05,-0.2) {$\omega$};
\color{red}
\draw (-6,1) -- (-4,1);
\draw (-3,0.5) -- (-1,0.5);
\draw (0.3,0.2) -- (2.3,0.2);
\draw (4.05,0.1) -- (6.05,0.1);
\node (a) at (-5,1.2) {$\sigma$};
\node (b) at (-2,0.7) {$\sigma$};
\node (c) at (1.3,0.4) {$\sigma$};
\node (d) at (5.05,0.3) {$\sigma$};
\color{black}
\draw[decorate,decoration={brace}]  (-6.1,0) -- (-6.1,1)
node (k) at (-6.4,0.5){\footnotesize $\gamma_n$};
\draw[decorate,decoration={brace,mirror}]  (-4,-0.2) -- (-3,-0.2)
node (m) at (-3.5,-0.4){\footnotesize $k_n$};
\end{tikzpicture}\vspace{-0.3cm}
Figure 1
\end{center}
\textsc{The $\mathcal{A}_2$ conditions.}
We will now prove that both $\mathcal{A}^{\alpha}_2$ and $\mathcal{A}^{\alpha,*}_2$ constants are bounded. Let $Q$ be a cube in $\mathbb{R}^2$, $J^n_0=[a_n,a_n+1]\times \{0\}$ and $J^n_{\gamma_n}=[a_n,a_n+1]\times \{\gamma_n\}$. We take cases for $Q$. If $Q$ intersects only one of the intervals $J^n_0$, say $J^0_0$ for convenience, and $(Q\cap J^0_0)_x=:J_0$ we have:
\begin{eqnarray*}
\mathcal{P}^\alpha(Q,\textbf{1}_{Q^c}\sigma)\frac{\omega(Q)}{|Q|^{1-\frac{\alpha}{2}}}
&\lesssim &
\ddot{\mathcal{P}}(J_0,\ddot{\sigma})\frac{\ddot{\omega}(J_0)}{|J_0|^{2-\alpha}}
+
\mathcal{P}^\alpha(Q,\textbf{1}_{(J^1_{\gamma_1})^c}\sigma)\frac{\ddot{\omega}(I_1^0)}{|Q|^{1-\frac{\alpha}{2}}}\\
&\leq&
\ddot{\mathcal{A}}_2^\alpha(\ddot{\sigma},\ddot{\omega})+C<\infty
\end{eqnarray*}
using (\ref{var_a2_bdd}) and taking $k_n$ large enough so that the second summand is bounded independently of the interval ($k_n=4^{2n\cdot \max\{(2-\alpha)^{-1},1\}}$ would do here). If $Q$ intersects more than one of the intervals $J^n_0$, it is easy to see, using that $Q$ is very big (since it intersects more than one of the intervals) and that $k_n$ is also large,  that:
\begin{eqnarray*}
\mathcal{P}^\alpha(Q,\textbf{1}_{Q^c}\sigma)\frac{\omega(Q)}{|Q|^{1-\frac{\alpha}{2}}}
\lesssim
1
\end{eqnarray*}
which of course shows that $\mathcal{A}^{\alpha}_2$ is bounded.
Essentially using the same calculations we see that $\mathcal{A}^{\alpha,*}_2$ is bounded as well. \\ $ $\\
\textsc{Off-Testing Constant}. Let us now check that the off-testing constant is not bounded. Choose the cube $Q_n=[a_n,a_n+1]\times[0,-1]$. Then, 
\begin{eqnarray*}
\frac{1}{\omega (Q_n)}\!\int_{Q_n^c}\!\!\bigg[\!\int_{Q_n}\!\frac{d\omega (y)}{|x-y|^{2-\alpha}}\bigg]^2\!\!d\sigma (x)
\!\geq\!
\frac{1}{\ddot{\omega}(I_1^0)}\!\int_{I_1^0}\!\!\bigg[\!\int_{I_1^0}\!\frac{d\ddot{\omega} (y_1)}{\sqrt{(x_1-y_1)^2+\gamma_n^2}^{2-\alpha}}\bigg]^2\!\!d\ddot{\sigma}(x_{1}\!)
\end{eqnarray*}
for $x=(x_1,x_2)$ and $y=(y_1,y_2)$. Taking $\gamma_n$ such that the last expression on the display above equals $n$ (note that this is feasible, since for $\gamma_n=0$, \eqref{testinfinity} gives infinity in the latter expression above) we have
$$
\mathcal{T}_{\textit{off},\alpha}^2\geq \frac{1}{\omega (Q_n)}\int_{Q_n^c}\bigg[\int_{Q_n}\frac{d\omega (y)}{|x-y|^{2-\alpha}}\bigg]^2d\sigma (x)\geq n
$$
and by letting $n\rightarrow \infty$ we obtain that the off-testing constant is not bounded. \\ $ $\\
\textsc{The Energy Conditions}. 
For the energy condition $\mathcal{E}_2^\alpha$ first, let $Q$ be a cube and $Q=\dot{\cup}Q_r$, where $\{Q_r\}_{r=1}^\infty$ is a decomposition of Q. Then we have 
$$
\frac{1}{\sigma(Q)}\sum_{r=1}^{\infty }\left( \frac{\mathrm{P}
^{\alpha }\left( Q_{r},\mathbf{1}_{Q}\sigma \right) }{\left\vert
Q_{r}\right\vert ^{\frac{1}{2}}}\right) ^{2}\left\Vert
x-m^\omega_{Q_{r}}\right\Vert _{L^{2}\left( \mathbf{1}_{Q_{r}}\omega \right) }^{2}
\!\!\!\leq
\frac{2}{\sigma(Q)}\sum_{r=1}^\infty \omega(Q_r)\big(\mathrm{P}^{\alpha }\left( Q_{r},\mathbf{1}_{Q}\sigma \right)\big)^2
$$
Assume  that $Q$ intersects $m$ intervals of the form $J_0^n$. Then we have $m-2\lesssim \sigma(Q)\lesssim m$. The case $m=1$ is exactly the same as the one dimensional analog for $\ddot{\mathcal{E}}$. Assume $m=2$. Now we need to take cases for $Q_r$:
\begin{enumerate}[(i)]
    \item Let $Q^1$ be the set of cubes $Q_r$ that intersect only one of the intervals $J^n_0$. Then we have, following the proof of \eqref{pivotal}, that 
    \begin{equation*}
    \sum_{Q_r \in Q^1}\omega(Q_r)\left(\mathrm{P}^{\alpha }\left( Q_{r},\mathbf{1}_{Q}\sigma \right)\right)^2\leq C\sigma(Q)
    \end{equation*}
    
    \item If $Q_r$ intersects both of the intervals $J^n_0$ then this $Q_r$ is unique since the family $\{Q_r\}_{r \in \mathbb{N}}$ forms a decomposition of $Q$. Therefore we have:
     \begin{equation*}
    \omega(Q_r)\left(\mathrm{P}^{\alpha }\left( Q_{r},\mathbf{1}_{Q}\sigma \right)\right)^2\lesssim  \frac{\omega(Q_r)\sigma(Q)}{|Q_r|^{2-\alpha}}\sigma(Q)\lesssim \sigma(Q)
    \end{equation*}
    using the fact that $|Q_r|\gtrsim 4^2$   since it intersect two of the intervals $J^n_0$ and $\omega(Q_r)\lesssim 2, \sigma(Q)\lesssim 2$.
\end{enumerate}
For $m\geq 3$, again we take cases for $Q_r$:
\begin{enumerate}[(i)]
    \item If $Q_r$ intersects only one $J^n_0$ we again have, following the proof of \eqref{pivotal}, that
   \begin{equation*}
    \sum_{Q_r \in Q^1}\omega(Q_r)\left(\mathrm{P}^{\alpha }\left( Q_{r},\mathbf{1}_{Q}\sigma \right)\right)^2\leq C\sigma(Q)
    \end{equation*}
    \item If $Q_r$ intersects more than one of the intervals $J^n_0$, the last one being $J^{n_0}_0$ we have
     \begin{equation*}
    \omega(Q_r)\left(\mathrm{P}^{\alpha }\left( Q_{r},\mathbf{1}_{Q}\sigma \right)\right)^2\lesssim \frac{\omega(Q_r)\sigma(Q_r^-)^2}{|Q_r|^{2-\alpha}}+\omega(Q_r)\sum_{k=1}^m\frac{1}{4^{2k}|Q_r|^{2-\alpha}} \lesssim  2
    \end{equation*}
    where $Q_r^-$ contains all the intervals $J^n_0$ such that  $n \leq n_0$. Again in the last inequality we use the fact that $Q_r$ is very big since it intersects at least two intervals $J^n_0$. Now since $Q_r$ form a decomposition of $Q$ we can have at most $m-1$ of these.
\end{enumerate}
Combining the above cases, we obtain
$$
\sum_{r=1}^\infty \omega(Q_r)\big(\mathrm{P}^{\alpha }\left( Q_{r},\mathbf{1}_{Q}\sigma \right)\big)^2\leq C\sigma(Q)+2m-2\leq 2C\sigma(Q)
$$
and that proves the energy condition is bounded.

The dual energy $\mathcal{E}^{\alpha,*}_2$ can also be proved bounded with the same calculations as in the energy condition following the proof of \eqref{e.gettingE}  instead of \eqref{pivotal} as in the first case above. This completes the proof of the Theorem \ref{fracint}.
\end{proof}
To obtain the same result for the Riesz transforms, we need to deal with the fact that the kernel is not positive. This prevents us from placing the masses for $\ddot{\sigma}$ at the center of the intervals $G^k_j$, as we did in the proof of Theorem \ref{fracint}.  Since otherwise, if the point-mass $\ddot{\sigma}$ is located at the center of $G^k_j$, it would result in the cancellation of much of the mass not letting us deduce that the off testing condition for the Riesz transform is unbounded. The following lemma, whose proof follows closely the work in \cite{LaSaUr} but with a two dimensional twist, helps us overcome this problem, showing that, while not being able to place the point masses in the middle of $G^k_j$, we can place them far from the boundary. This enables us to show that the $\ddot{\mathcal{A}}_2$ condition is bounded, like in the proof of Theorem \ref{fracint}. First we need to define the operator
$$
\ddot{R}f(x)=\int_\mathbb{R}\frac{(x-y)f(y)}{|x-y|^{3-\alpha}}dy
$$
\begin{lemma}\label{lemma}
For $k \geq 1,\ 1\leq j \leq 2^k$, write $G^k_j=(a^k_j,b^k_j)$. Then there exists $0<c<1$ that depends only on $\alpha$ such that
$$
\ddot{R}\ddot{\omega}\!\left(\!a^k_j\!+\!c\left(\!\frac{1-b}{2}\right)^{k}\!b\!\right)
\approx
\left(\frac{s_0}{2}\right)^k
$$
where $\ddot{\omega}$ is the measure defined above.
\end{lemma}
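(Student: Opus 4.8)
The plan is to exploit the antisymmetry of the signed kernel. Writing
$$
\ddot R\ddot\omega(x)=\int_\mathbb{R}\frac{\operatorname{sgn}(x-y)}{|x-y|^{2-\alpha}}\,d\ddot\omega(y),
$$
I would evaluate at the point $x=a^k_j+cb(\tfrac{1-b}{2})^k$, which sits off the centre $\ddot z^k_j$ of the gap, so that the near-symmetric masses of the two children of $I^k_j$ no longer cancel. After rescaling so that $I^k_j=[0,L]$ with $L=(\tfrac{1-b}{2})^k$, the left child $I^{k+1}_{2j-1}=[0,\tfrac{1-b}{2}L]$ lies entirely to the left of $x$ and the right child $I^{k+1}_{2j}=[\tfrac{1+b}{2}L,L]$ entirely to its right, so their contributions are $+A$ and $-B$ with $A=\int_{I^{k+1}_{2j-1}}(x-y)^{-(2-\alpha)}d\ddot\omega$ and $B=\int_{I^{k+1}_{2j}}(y-x)^{-(2-\alpha)}d\ddot\omega$. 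The target value is exactly the natural scale $\ddot\omega(I^k_j)/|I^k_j|^{2-\alpha}=(s_0/2)^k$.

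First I would dispose of the upper bound. Since $|\ddot R\ddot\omega(x)|\le\ddot T\ddot\omega(x)$, a direct estimate analogous to the Poisson bounds established above applies: the contribution of $I^k_j$ itself is at most $\ddot\omega(I^k_j)/(cbL)^{2-\alpha}\approx(s_0/2)^k$ (here $c$ must stay bounded away from $0$, otherwise $x$ approaches the endpoint of the left child, where the integral diverges since $2-\alpha$ exceeds the dimension of $\mathrm{E}_b$), while the masses outside $I^k_j$, grouped by the sibling of each ancestor, form a geometric series $\sum_{m=0}^{k-1}(s_0/2)^m\approx\frac{2}{s_0}(s_0/2)^k$ that converges precisely because $s_0\ge 3>2$. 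This yields $\ddot R\ddot\omega(x)\lesssim(s_0/2)^k$.

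The heart of the matter is the matching lower bound, i.e. controlling the cancellation. Here the key device is the reflection $y\mapsto L-y$ about the centre of $I^k_j$: by the symmetry of the Cantor measure inside $I^k_j$ it carries $\mathbf 1_{I^{k+1}_{2j}}\ddot\omega$ to $\mathbf 1_{I^{k+1}_{2j-1}}\ddot\omega$, so substituting $u=L-y$ gives $B=\int_{I^{k+1}_{2j-1}}(x^*-u)^{-(2-\alpha)}d\ddot\omega(u)$ with $x^*=L-x$. For $c<\tfrac12$ one has $x<L/2<x^*$, whence $x^*-u>x-u>0$ and
$$
A-B=\int_{I^{k+1}_{2j-1}}\Big[(x-u)^{-(2-\alpha)}-(x^*-u)^{-(2-\alpha)}\Big]d\ddot\omega(u)>0.
$$
A mean value estimate, using $x^*-x=bL(1-2c)$ together with $x-u,\,x^*-u\approx L$, then gives $A-B\gtrsim(2-\alpha)b(1-2c)(s_0/2)^k$.

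Finally I would combine this with the far field. Writing $\ddot R\ddot\omega(x)=(A-B)+C$, where $C$ collects the masses outside $I^k_j$, the geometric bound above gives $|C|\lesssim\frac{2}{s_0}(s_0/2)^k$ with an explicit constant; choosing $c$ a fixed number in $(0,\tfrac12)$ depending only on $\alpha$, small enough that $(1-2c)$ dominates this far-field constant yet bounded away from $0$ so that the upper bound for $A$ survives, yields $\ddot R\ddot\omega(x)\gtrsim(s_0/2)^k$, and hence $\ddot R\ddot\omega(x)\approx(s_0/2)^k$. The main obstacle is exactly this balancing act: the signed kernel forces a Goldilocks choice of $c$ — away from $\tfrac12$ to defeat the cancellation between the two children and to overcome the far field (which, for the extreme intervals such as $I^k_1$, has a definite sign opposing $A-B$), and away from $0$ to keep $A$ of the correct order. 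The admissibility range $\tfrac19\le(\tfrac{1-b}{2})^{2-\alpha}\le\tfrac13$, i.e. $3\le s_0\le 9$, is what makes such a window non-empty uniformly in $k,j$.
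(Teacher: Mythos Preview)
Your reflection-symmetry idea is elegant, but the mean-value lower bound on $A-B$ is too weak to close the argument. Carrying it through gives
\[
A-B\ \ge\ \frac{(2-\alpha)\,b\,(1-2c)}{2}\Big(\frac{s_0}{2}\Big)^{k},
\]
which is bounded uniformly in $c$ by $\tfrac12(2-\alpha)b\,(s_0/2)^k$. On the other hand the far-field bound, obtained by summing over the sibling of each ancestor, yields $|C|\le \frac{1}{b^{\,2-\alpha}(s_0-2)}(s_0/2)^k$. At the admissible endpoint $\alpha=0$, $b=\tfrac13$, $s_0=9$ one gets near-field $\le \tfrac13$ versus far-field constant $\tfrac{9}{7}$; and as $\alpha\uparrow 2$ the factor $(2-\alpha)b\to 0$ while the far-field constant stays bounded below. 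So the ``Goldilocks window'' you invoke is in fact empty for these parameters, and the lower bound fails as stated.

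The paper avoids this trap by a different mechanism. It first uses the monotonicity of $x\mapsto\ddot R\ddot\omega(x)$ to reduce to $j=1$ (lower bound) and $j=2^k$ (upper bound). For the lower bound at $j=1$ it splits off only the \emph{left} child $I^{k+1}_1$ and observes that its contribution
\[
\int_{I^{k+1}_1}\frac{d\ddot\omega(y)}{\big(a^k_1+cb(\tfrac{1-b}{2})^k-y\big)^{2-\alpha}}
\ \gtrsim\ \Big(\frac{s_0}{2}\Big)^{k}\sum_{\ell\ge 1}\frac{2^{-\ell-1}}{\big((\tfrac{1-b}{2})^\ell+cb\big)^{2-\alpha}}
\]
diverges as $c\to 0$ (precisely because $s_0>2$). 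Hence one simply chooses $c$ small enough, depending only on $\alpha$, so that this term exceeds twice the far-field constant; there is no balancing against the upper bound, since once $c$ is fixed the upper estimate $\lesssim c^{-(2-\alpha)}(s_0/2)^k$ is a legitimate constant depending only on $\alpha$. Your argument becomes correct if you replace the mean-value step by this divergence observation (equivalently, lower-bound $A$ itself by a quantity tending to $\infty$ as $c\to 0$); the reflection trick is then unnecessary.
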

\begin{proof}
Fix $k$. We have 
$$
\ddot{R}\ddot{\omega}\!\left(\!a^k_1\!+\!c\left(\!\frac{1-b}{2}\right)^{k}\!b\!\right)
\!\leq\!
\ddot{R}\ddot{\omega}\!\left(\!a^k_j\!+\!c\left(\!\frac{1-b}{2}\right)^{k}\!b\!\right)
\!\leq\!
\ddot{R}\ddot{\omega}\!\left(\!a^k_{2^k}\!+\!c\left(\!\frac{1-b}{2}\right)^{k}\!b\!\right)
$$
from monotonicity. So it is enough to prove the following:
$$
\left(\frac{s_0}{2}\right)^k
\lesssim
\ddot{R}\ddot{\omega}\!\left(\!a^k_1\!+\!c\left(\!\frac{1-b}{2}\right)^{k}\!b\!\right)
\leq
\ddot{R}\ddot{\omega}\!\left(\!a^k_{2^k}\!+\!c\left(\!\frac{1-b}{2}\right)^{k}\!b\!\right)
\lesssim 
\left(\frac{s_0}{2}\right)^k 
$$
We start with right hand inequality. Following the definitions of $\ddot{R},\ddot{\omega}$ we get
\begin{eqnarray*}
\ddot{R}\ddot{\omega}\!\left(\!a^k_{2^k}\!+\!c\left(\!\frac{1-b}{2}\right)^{k}\!b\!\right)
\!\!\!\!&\leq&\!\!\!\!
\int_{[0,a^k_{2^k}]}\frac{d\ddot{\omega}(y)}{\left(a^k_{2^k}\!+\!c\left(\frac{1-b}{2}\right)^{k}\!b\!-\!y\right)^{2-\alpha}}\\
\!\!\!\!&\leq&\!\!\!\!
\sum_{\ell=1}^{k}\frac{2^{-\ell}}{\left(a^k_{2^k}\!+\!c\left(\frac{1-b}{2}\right)^{k}\!b\!-\!\left[1\!-\!\left(\frac{1-b}{2}\right)^{\ell-1}\!\left(\frac{1+b}{2}\right)\right]\right)^{2-\alpha}}\\
&\approx&\!\!\!\!
\frac{2^{-k}}{c^{2-\alpha}s_0^{-k}}\!+\!\sum_{\ell=1}^{k-1}\frac{2^{-\ell}}{s_0^{-\ell}\!\left[\frac{1+b}{2}\!+\!\left(\frac{1-b}{2}\right)^{k-\ell+1}\!\left[cb\!-\!\frac{1+b}{2}\right]\right]^{2-\alpha}}\\
&\leq&\!\!\!\!
\frac{2^{-k}}{c^{2-\alpha}s_0^{-k}}\!+\!\sum_{\ell=1}^{k-1}\frac{2^{-\ell}}{s_0^{-\ell}\!\left[\frac{1+b}{2}\!-\!\frac{1+b}{2}\left(\frac{1-b}{2}\right)^{k-\ell+1}\right]^{2-\alpha}}
\end{eqnarray*}
since $a_{2^k}^k=1\!-\!\left(\frac{1+b}{2}\right)\left(\frac{1-b}{2}\right)^k$. The square bracket inside the last fraction is minimized for $\ell=k-1$ and we get the inequality
$$
\ddot{R}\ddot{\omega}\!\left(\!a^k_{2^k}\!+\!c\left(\!\frac{1-b}{2}\right)^{k}\!b\!\right)
\lesssim
\frac{2^{-k}}{c^{2-\alpha}s_0^{-k}}+\sum_{\ell=1}^{k-1}\left(\frac{s_0}{2}\right)^\ell \lesssim \frac{1}{c^{2-\alpha}}\left(\frac{s_0}{2}\right)^k
$$
where the implied constants depend again only on $\alpha$. We should note here that the summand with $\ell=k$ is the dominant one in the above inequality. 

Now we consider the left hand inequality. We have that
$
\ddot{R}\ddot{\omega}\left(\!a^k_1\!+\!c\!\left(\!\frac{1-b}{2}\!\right)^{\!k}\!b\right)
$ equals
\begin{equation}\label{varR}
\ddot{R}\ddot{\omega}\mathbf{1}_{I^{k+1}_1}
\left(\!a^k_1\!+\!c\!\left(\!\frac{1-b}{2}\!\right)^{\!k}\!b\right)
+\sum_{\ell=1}^{k+1}\ddot{R}\ddot{\omega}\mathbf{1}_{I^\ell_2}\left(\!a^k_1\!+\!c\!\left(\!\frac{1-b}{2}\!\right)^{\!k}\!b\right)
\end{equation}
and following the argument for the previous inequality we see that
$$
\left|\sum_{\ell=1}^{k+1}\ddot{R}\ddot{\omega}\mathbf{1}_{I^\ell_2}\left(\!a^k_1\!+\!c\!\left(\!\frac{1-b}{2}\!\right)^{\!k}\!b\right)\right|\leq A\left(\frac{s_0}{2}\right)^k
$$
where $A$ depends only on $\alpha$ but not on $c$. The first summand of (\ref{varR}) gives
\begin{eqnarray*}
\int_{I_1^{k+1}}\frac{d\ddot{\omega}(y)}{\left(a^k_1\!+\!c\left(\frac{1-b}{2}\right)^{k}\!b\!-\!y\right)^{2-\alpha}}
&\geq&
\sum_{\ell=k+1}^\infty\frac{2^{-\ell-1}}{\left(\left(\frac{1-b}{2}\right)^\ell+c\left(\frac{1-b}{2}\right)^{k}b\right)^{2-\alpha}}
\end{eqnarray*}
\begin{eqnarray*}
&\hspace{4.3cm}\approx&
\frac{s_0^k}{2^k}\sum_{\ell=k+1}^\infty\frac{2^{-\ell+k-1}}{\left(\left(\frac{1-b}{2}\right)^{\ell-k}\!+\!cb\right)^{2-\alpha}}\\
&\hspace{4.3cm}=&
\frac{s_0^k}{2^k}\sum_{\ell=1}^\infty\frac{2^{-\ell-1}}{\left(\left(\frac{1-b}{2}\right)^{\ell}\!+\!c b\right)^{2-\alpha}}.
\end{eqnarray*}
Choosing $c$ small enough not depending on $k$ (since the last sum does not depend on $k$), we obtain 
$$
\int_{I_1^{k+1}}\frac{d\ddot{\omega}(y)}{\left(a^k_1\!+\!c\left(\frac{1-b}{2}\right)^{k}\!b\!-\!y\right)^{2-\alpha}}  \geq C_1\left(\frac{s_0}{2}\right)^k
$$
with $C_1>2A$ and we conclude our lemma.
\end{proof}
\begin{proof}[Proof of Theorem \ref{Riesz}]
Set $\dot{z}^k_j=a^k_j\!+\!cb\left(\!\frac{1-b}{2}\right)^{k}$ and define the measure 
$\displaystyle
\dot{\sigma}=\sum_{k,j}s^k_j\delta_{\dot{z}^k_j}
$
where $s_j^k=\left(\frac{2}{s_0^2}\right)^k$ as before. Following verbatim the calculations of Theorem \ref{fracint}, one can show that
$
\ddot{\mathcal{A}}_2(\dot{\sigma},\ddot{\omega})<\infty.
$
Now define the measures $\omega$ and $\sigma$, as before, for any measurable set $E\subset\mathbb{\R}^2$ by

$$
\omega(E)=\sum_{n=0}^\infty \ddot{\omega}_n(E)\ \ \text{and}\ \ \sigma(E)=\sum_{n=0}^\infty \dot{\sigma}_n(E)
$$
where $\dot{\sigma}_0(E)=\dot{\sigma}([E\cap (I_1^0\times\{\gamma_0\})]_x)$, and $\dot{\sigma}_n$ are copies of $\dot{\sigma}_0$ at the intervals $[a_n,a_n+1]\times \{\gamma_n\}$, and where the height $\gamma_n$ will be determined later. Again, as before, it is easy to see that both $\mathcal{A}_2^\alpha$ and $\mathcal{A}_2^{\alpha,*}$ and both $\mathcal{E}^\alpha_2$ and $\mathcal{E}^{\alpha,*}_2$ are bounded. Let us now finish the proof by showing that the off-testing constant for the Riesz transforms are unbounded. From Lemma \ref{lemma} we have  
$
\ddot{R}\ddot{\omega}(\dot{z}_j^k)\gtrsim \left(\frac{s_0}{2}\right)^k
$
which implies
\begin{eqnarray}
\int_{I_1^0}\left(\ddot{R}(\mathbf{1}_{I_1^0}\ddot{\omega})(x)\right)^2d\dot{\sigma}(y)
\gtrsim
\sum_{k=1}^
\infty\sum_{j=1}^{2^k}s^k_j\cdot \left(\frac{s_0}{2}\right)^{2k}
= \label{test_R_infty}
\sum_{k=1}^\infty\sum_{j=1}^{2^k}\frac{1}{2^k}=\infty. 
\end{eqnarray}
Now choose the cube $Q_n=[a_n,a_n+1]\times[0,-1]$. Then,
\begin{eqnarray*}
\mathcal{R}_{1,\textit{off},\alpha}^2
&\geq&
\frac{1}{\omega (Q_n)}\int_{Q_n^c}\bigg[\int_{Q_n}\!\frac{(x_1-y_1)d\omega (y)}{|x-y|^{3-\alpha}}\bigg]^2\!d\sigma (x)\\
&\geq&
\frac{1}{\omega(Q_n)} \int_{I_1^0}\bigg[\int_{I_1^0}\frac{(x_1-y_1)d\ddot{\omega} (y_1)}{\sqrt{(x_1-y_1)^2+\gamma_n^2}^{3-\alpha}}\bigg]^2\!d\dot{\sigma}(x_1)
=
\frac{n}{\omega(Q_n)}
\end{eqnarray*}
by choosing the height $\gamma_n$ so that
$\int_{I_1^0}\!\!\bigg[\!\int_{I_1^0}\frac{(x_1-y_1)d\ddot{\omega} (y_1)}{\sqrt{(x_1-y_1)^2+\gamma_n^2}^{3-\alpha}}\bigg]^2\!d\dot{\sigma}(x_1)=n$ by (\ref{test_R_infty}). Letting $n\rightarrow\infty$, we see that the off-testing constant is unbounded.
\end{proof}
\textbf{Acknowledgement:} We would like to thank professors Eric Sawyer and Ignacio Uriarte-Tuero for edifying discussions and their useful suggestions.


\begin{thebibliography}{40}
\addtolength{\leftmargin}{-0.5in} 
\setlength{\itemindent}{-0.5in}

\bibitem[La]{La} \hspace{1.1cm} Lacey M. \textit{Two weight inequality for the Hilbert transform: A real variable characterization, II}, Duke Math. J. Volume 163, Number 15 (2014), 2821-2840.

\bibitem[LaSaShUT]{LaSaShUT} \hspace{0.13cm}Lacey M., Sawyer E., Shen C-S, Uriarte-Tuero I. \textit{Two weight inequality for the Hilbert transform: A real variable characterization, I}, Duke Math. J. Volume 163, Number 15 (2014), 2795-2820.

\bibitem[LaSaUr]{LaSaUr} \hspace{0.48cm} \textsc{Lacey M., Sawyer E.,
Uriarte-Tuero I., } \textit{A Two Weight Inequality for the Hilbert
transform assuming an energy hypothesis, } Journal of Functional Analysis,
Volume \textbf{263} (2012), Issue 2, 305-363.

\bibitem[Hy]{Hy} \hspace{1.1cm} Hyt\"{o}nen, T. \textit{The two-weight inequality for the Hilbert transform with general measures}, Proc. London Math. Soc. (3) 117 (2018) 483-526.

\bibitem[HyMa]{HyMa} \hspace{0.65cm} Hyt\"{o}nen, T., Martikainen, H.  \textit{On general local $T_b$ theorems}. Transactions of the American Mathematical Society, 364(9), (2012), 4819-4846

\bibitem[SaShUT]{SaShUT}\hspace{0.35cm} Sawyer, E., Shen, C-S, Uriarte-Tuero, I. \textit{A two weight local $T_b$ Theorem for the Hilbert Transform}, arXiv:1709.09595.

\end{thebibliography}
\end{document}